\begin{document}

\title*{The Tower of Hanoi and Finite Automata}
\author{Jean-Paul Allouche and Jeff Shallit}

\institute{Jean-Paul Allouche \at CNRS, IMJ-PRG, Sorbonne Universit\'e
4 Place Jussieu 75005 Paris, France, \email{jean-paul.allouche@imj-prg.fr}
\and Jeff Shallit \at School of Computer Science, University of Waterloo, Waterloo, 
Ontario  N2L 3G1, Canada, \email{shallit@uwaterloo.ca}}

\maketitle

\abstract{Some of the algorithms for solving the Tower of Hanoi puzzle can
be applied ``with eyes closed'' or ``without memory''. Here we survey the solution
for the classical Tower of Hanoi that uses finite automata,
as well as some variations on the original puzzle.
In passing, we obtain a new result on morphisms generating the classical and
the lazy Tower of Hanoi.}

\section{Introduction}

A huge literature in mathematics and theoretical computer science deals with 
the Tower of Hanoi and generalizations. The reader can look at the references given 
in the bibliography of the present paper, but also at the papers cited in these 
references (in particular in \cite{AD, Hinz1}). A very large bibliography was 
provided by Stockmeyer \cite{Stockmeyer}.  Here we present a survey of the relations
between the Tower of Hanoi and monoid morphisms or finite automata.
We also give a new result on morphisms generating the classical and the lazy Tower 
of Hanoi (Theorem~\ref{newHanoi}).

Recall that the Tower of Hanoi puzzle has three pegs, labeled I, II, III,
and $N$ disks of radii $1, 2, \ldots, N$. At the beginning the disks are placed
on peg I, in decreasing order of size (the smallest disk on top). A move consists
of taking the topmost disk from one peg and moving it to another peg, with the
condition that no disk should cover a smaller one. The purpose is to transfer all
disks from the initial peg to another one (where they are thus in decreasing order
as well).

The usual ({\it recursive}) approach for solving the Hanoi puzzle consists in 
noting that, in order to move $(N+1)$ disks from a peg to another, it is necessary 
and sufficient first to move the smallest $N$ disks to the third peg, then to move 
the largest disk to the now empty peg, and finally to transfer the smallest $N$ 
disks on that third peg. An easy induction shows that the number of moves for $N$ 
disks is thus $2^N-1$ and that it is optimal.

Applying this recursive algorithm with a small number of disks (try it with $3$ disks), 
shows that it transfers $1$ disk (the smallest) from peg I to peg II; then continuing 
the process, the sub-tower consisting of the disks of radii $1$ and $2$ will be 
reconstructed on peg III; and the sub-tower consisting of the disks of radii $1$, $2$ 
and $3$ will be reconstructed on peg II.
More generally, let ${\cal S}_N$ be the sequence of moves that transfers the tower
with the smallest $N$ disks from peg I to peg II if $N$ is odd, and from peg I to 
peg III if $N$ is even. Then, for any positive integer $k \leq N$, the sequence 
${\cal S}_N$ begins with the sequence ${\cal S}_k$. In other words, there exists 
an infinite sequence of moves ${\cal S}_{\infty}$, such that, for any integer $N$, 
the first $(2^N-1)$ moves of ${\cal S}_{\infty}$ solve the Hanoi puzzle by moving the 
tower of $N$ disks from peg I to peg II or III according to whether
$N$ is odd or even.

We let $a$, $b$, $c$ denote the moves that take the topmost disk from peg I to
peg II, resp. from peg II to peg III, resp. from peg III to peg I. 
Let $\overline{a}$, $\overline{b}$, $\overline{c}$ be the inverse moves (e.g., 
$\overline{c}$ moves the topmost disk from peg I to peg III). Then, as the reader
can easily check
$$
{\cal S}_{\infty} = a \ \overline{c} \ b \ a \ c \ \overline{b} \ a \ \overline{c} \ b \ 
\overline{a} \ c \ b \ a \ \overline{c} \ b \ a \ \cdots
$$

\begin{remark}

Playing with this algorithm leads to the discovery (and to the proof) 
of a ``simpler'' algorithm for the puzzle's solution, where

\medskip

- the first, third, fifth, etc., moves only concern the smallest disk, 
which moves circularly from peg I to peg II, from peg II to peg III, from 
peg III to peg I, and so forth;

\smallskip

- the second, fourth, sixth, etc., moves leave the smallest disk
fixed on its peg.  Hence, they consist in looking at the topmost disk of 
each of the two other pegs, and in moving the smaller to cover the larger.

\medskip

We note that this ``simpler'' algorithm cannot be performed ``without 
memory'' nor ``with eyes closed'' (i.e., without looking at the pegs):
namely at the even steps, we need to know the sizes of the topmost disks and 
compare them.  The next section 
addresses the question of finding an algorithm that can
be applied ``with bounded memory'' and ``with eyes closed''.

The ``simpler'' algorithm where the smallest disk moves circularly every second 
move is attributed to Raoul Olive, the nephew of \'Edouard Lucas in \cite{Lucas}. 
Also, reconstructing the Tower of Hanoi on peg II or peg III according to the parity 
of the number of disks can be seen as a ``dual'' of the strategy of R. Olive, 
where the smallest disk moves circularly either clockwise or counter-clockwise,
according to the parity of the number of disks and the desired final peg where
the Tower of Hanoi is reconstructed.
\end{remark}

\begin{remark}\label{variations}
Several variations on this game have been
introduced: the cyclic Tower of Hanoi (using only the
moves $a$, $b$, and $c$ in the notation above), the
lazy Tower of Hanoi
(using only the moves $a$, $\overline{a}$, $b$, $\overline{b}$), the
colored Tower of Hanoi,
Antwerpen Towers, $d$ pegs instead of $3$ pegs, etc. 
There are also variations studied 
in cognitive psychology: the Tower of Hanoi itself \cite{Simon}, the Tower of
London \cite{Shallice}, and the Tower of Toronto \cite{SaintCyr}. We do not resist
to propose a modest contribution to the world of variations on the Tower of
Hanoi, in honor of the organizers of the Symposium 
\selectlanguage{french}
``La \og Tour d'Hano\"{\i} \fg, 
un casse-t\^ete math\'ematique d'\'Edouard Lucas (1842-1891)''.
\selectlanguage{english}
Start with three pegs, and disks indexed by a given word on the usual Latin alphabet. 
Move as usual the topmost disk from a peg to another, the rule being that no two 
consecutive vowels should appear. Start from {\tt HANOI}. Well,
{\tt O} and {\tt I} 
are already consecutive. Let us say that ``{\tt O} = oh = Zero = {\tt Z}'', and let 
us thus replace {\tt HANOI} with {\tt HANZI}.  Here are a few permitted moves:
starting with {\tt HANZI}, we get successively
$$
\begin{array}{lllll}
  &(0) 
  &\begin{array}{cc}
     &{\tt H} \\
     &{\tt A} \\
     &{\tt N} \\
     &{\tt Z} \\
     &{\tt I} \\
     &{---} \\
   \end{array}
  &\begin{array}{cc}
     &{\tt } \\
     &{\tt } \\
     &{\tt } \\
     &{\tt } \\
     &{\tt } \\
     &{---} \\
   \end{array}
  &\begin{array}{cc}
     &{\tt } \\
     &{\tt } \\
     &{\tt } \\
     &{\tt } \\
     &{\tt } \\
     &{---} \\
   \end{array}
 \\
\end{array}
$$
$$
\begin{array}{lllll}
  &(1)
  &\begin{array}{cc}
     &{\tt } \\
     &{\tt A} \\
     &{\tt N} \\
     &{\tt Z} \\
     &{\tt I} \\
     &{---} \\
   \end{array}
  &\begin{array}{cc}
     &{\tt } \\
     &{\tt } \\
     &{\tt } \\
     &{\tt } \\
     &{\tt H} \\ 
     &{---} \\
   \end{array}
  &\begin{array}{cc}
     &{\tt } \\
     &{\tt } \\
     &{\tt } \\
     &{\tt } \\
     &{\tt } \\
     &{---} \\
   \end{array}
\\
  &(2)
  &\begin{array}{cc}
     &{\tt } \\
     &{\tt } \\
     &{\tt N} \\
     &{\tt Z} \\
     &{\tt I} \\
     &{---} \\
   \end{array}
  &\begin{array}{cc}
     &{\tt } \\
     &{\tt } \\
     &{\tt } \\
     &{\tt A} \\
     &{\tt H} \\
     &{---} \\
   \end{array}
  &\begin{array}{cc}
     &{\tt } \\
     &{\tt } \\
     &{\tt } \\
     &{\tt } \\
     &{\tt } \\
     &{---} \\
   \end{array}
\\
  &(3)
  &\begin{array}{cc}
     &{\tt } \\
     &{\tt } \\
     &{\tt } \\
     &{\tt Z} \\
     &{\tt I} \\
     &{---} \\
   \end{array}
  &\begin{array}{cc}
     &{\tt } \\
     &{\tt } \\
     &{\tt N} \\
     &{\tt A} \\
     &{\tt H} \\
     &{---} \\
   \end{array}  
  &\begin{array}{cc}
     &{\tt } \\
     &{\tt } \\
     &{\tt } \\
     &{\tt } \\
     &{\tt } \\
     &{---} \\
   \end{array}
\\
  &(4)
  &\begin{array}{cc}
     &{\tt } \\
     &{\tt } \\
     &{\tt } \\
     &{\tt } \\
     &{\tt I} \\
     &{---} \\
   \end{array}
  &\begin{array}{cc}
     &{\tt } \\
     &{\tt } \\
     &{\tt N} \\
     &{\tt A} \\
     &{\tt H} \\
     &{---} \\
   \end{array} 
  &\begin{array}{cc}
     &{\tt } \\
     &{\tt } \\
     &{\tt } \\
     &{\tt } \\
     &{\tt Z} \\
     &{---} \\
   \end{array}
\\
\end{array}
\hskip 1.2truecm
\begin{array}{lllll}
  &\begin{array}{cc}
     &{\tt } \\
     &{\tt } \\
     &{\tt } \\
     &{\tt } \\
     &{\tt I} \\
     &{---} \\
   \end{array}
  &\begin{array}{cc}
     &{\tt } \\
     &{\tt } \\
     &{\tt } \\
     &{\tt A} \\
     &{\tt H} \\
     &{---} \\
   \end{array}
  &\begin{array}{cc}
     &{\tt } \\
     &{\tt } \\
     &{\tt } \\
     &{\tt N} \\
     &{\tt Z} \\
     &{---} \\
   \end{array}
  &(5) 
\\
  &\begin{array}{cc}
     &{\tt } \\
     &{\tt } \\
     &{\tt } \\
     &{\tt } \\
     &{\tt } \\
     &{---} \\
   \end{array}
  &\begin{array}{cc}
     &{\tt } \\
     &{\tt } \\
     &{\tt } \\
     &{\tt A} \\
     &{\tt H} \\
     &{---} \\
   \end{array}
  &\begin{array}{cc}
     &{\tt } \\
     &{\tt } \\
     &{\tt I} \\
     &{\tt N} \\
     &{\tt Z} \\
     &{---} \\
   \end{array}
  &(6) 
\\
  &\begin{array}{cc}
     &{\tt } \\
     &{\tt } \\
     &{\tt } \\
     &{\tt } \\
     &{\tt A} \\
     &{---} \\
   \end{array}
  &\begin{array}{cc}
     &{\tt } \\
     &{\tt } \\
     &{\tt } \\
     &{\tt } \\
     &{\tt H} \\
     &{---} \\
   \end{array}
  &\begin{array}{cc}
     &{\tt } \\
     &{\tt } \\
     &{\tt I} \\
     &{\tt N} \\
     &{\tt Z} \\
     &{---} \\
   \end{array}
  &(7) 
\\
  &\begin{array}{cc}
     &{\tt } \\
     &{\tt } \\
     &{\tt } \\
     &{\tt } \\
     &{\tt A} \\
     &{---} \\
   \end{array}
  &\begin{array}{cc}
     &{\tt } \\
     &{\tt } \\
     &{\tt } \\
     &{\tt } \\
     &{\tt } \\
     &{---} \\    
   \end{array}
  &\begin{array}{cc}
     &{\tt } \\
     &{\tt H} \\
     &{\tt I} \\
     &{\tt N} \\
     &{\tt Z} \\
     &{---} \\
   \end{array} \hskip .8truecm
  &(8) 
\\
\end{array}
$$

\bigskip
\medskip

\noindent
{\it Exercise: concoct two variations giving respectively} 

$$
\begin{array}{cc}
     &{\tt H} \\
     &{\tt A} \\
     &{\tt N} \\
     &{\tt O} \\
     &{\tt I} \\
\end{array} \ \ \ \ \
\longrightarrow \ \
\ldots \ \ 
\longrightarrow \ \ \ \ \
\begin{array}{cc}
     &{\tt D} \\
     &{\tt A} \\
     &{\tt N} \\
     &{\tt E} \\
     &{\tt K} \\
\end{array} 
\hskip 1truecm
\mbox{\it and}
\hskip 1truecm  
\begin{array}{cc}
     &{\tt H} \\
     &{\tt A} \\
     &{\tt N} \\
     &{\tt O} \\
     &{\tt I} \\
\end{array}  \ \ \ \ \
\longrightarrow \ \
\ldots \ \
\longrightarrow \ \ \ \ \
\begin{array}{cc}
     &{\tt G} \\
     &{\tt A} \\
     &{\tt U} \\
     &{\tt Z} \\
     &{\tt Y} \\
\end{array}
$$
\end{remark}

\section{The infinite sequence of moves ${\cal S}_{\infty}$
and an easy way of generating it}

We will focus on the infinite sequence ${\cal S}_{\infty}$ defined above.
It is a sequence on six symbols, i.e., a sequence over the $6$-letter alphabet
$\{a, b, c, \overline{a}, \overline{b}, \overline{c}\}$).
This sequence can also be seen as a sequence of moves that ``tries'' to
reconstruct a Tower of Hanoi with infinitely many disks, by reconstructing the 
sub-towers with the smallest $N$ disks for $N = 1, 2, 3, \ldots$ on peg II or III
acording to the parity of $N$.

Group the letters of ${\cal S}_{\infty}$ pairwise, and write this sequence 
of pairs of letters just under the sequence ${\cal S}_{\infty}$:
$$
\begin{array}{cccccccccc}
&a \ &\overline{c} \ &b \ &a \ &c \ &\overline{b} \ &a  \ &\overline{c} \ 
&\cdots
\\
&(a \ \overline{c}) \ &(b \ a) \ &(c \ \overline{b}) \ &(a \ \overline{c})
\ &(b \ \overline{a}) \ &(c \ b) \ &(a \ \overline{c}) \ &(ba) \  &\cdots
\\
\end{array}
$$
We observe that under any of the six letters we always find the same pair 
of letters, e.g., there always is an $(a\overline{c})$ under an $a$. More 
precisely, if we associate a $2$-letter word with each letter in 
$\{a, b, c, \overline{a}, \overline{b}, \overline{c}\}$ 
as follows
$$
\begin{array}{lllllll}
&a &\longrightarrow& a \overline{c} \hskip 1truecm &\overline{a} &\longrightarrow& a c \\
&b &\longrightarrow& c \overline{b}                &\overline{b} &\longrightarrow& c b \\
&c &\longrightarrow& b \overline{a}                &\overline{c} &\longrightarrow& b a \\
\end{array}
$$
we can obtain the infinite sequence ${\cal S}_{\infty}$ by starting with $a$
and iterating the map above, where the image of a word is obtained by ``gluing''
({\em concatenating}) together the images of letters of that word:
$$
a \longrightarrow a \ \overline{c} \longrightarrow a \ \overline{c} \ b \ a
\longrightarrow a \ \overline{c} \ b \ a \ c \ \overline{b} \ a \ \overline{c} 
\longrightarrow \cdots
$$
This result was proven in \cite{ABS}. We give more details below.

\begin{definition}
Let ${\cal A}$ be an {\em alphabet}, i.e., a finite set. A {\em word} on ${\cal A}$
is a finite sequence of symbols from ${\cal A}$ (possibly empty). The set of all
words over ${\cal A}$ is denoted by ${\cal A}^*$. The {\em length} of a word is the 
number of symbols that it contains (the length of the empty word $\epsilon$ is $0$). 
The {\em concatenation} of two words $a_1 a_2 \cdots a_r$ and 
$b_1 b_2 \cdots b_s$ of lengths $r$ and $s$, respectively, is the word 
$a_1 a_2 \cdots a_r b_1 b_2 \cdots b_s$ of length $r+s$ obtained by 
gluing them in order. The set ${\cal A}^*$ equipped with concatenation is
called the {\em free monoid} generated by ${\cal A}$.

A sequence of words $u_{\ell}$ of ${\cal A}^*$ is said to {\em converge}
to the infinite sequence $(a_n)_{n \geq 0}$ on ${\cal A}$ if the length
of the largest prefix of $u_{\ell}$ that coincides with the prefix of 
$(a_n)_{n \geq 0}$ of the same length tends to infinity with $\ell$.
\end{definition}

\begin{remark}
It is easy to see that ${\cal A}^*$ equipped with concatenation is indeed
a monoid: concatenation is associative, and the empty word $\epsilon$ is the identity
element. This monoid is {\em free}: this means intuitively that there are no
relations between elements other than the relations arising from
the associative property
and the fact that the empty word is the identity element. In particular this
monoid is not {\em commutative} if ${\cal A}$ has at least two distinct
elements.
\end{remark}

\begin{definition}
Let ${\cal A}$ and ${\cal B}$ be two alphabets. A {\em morphism} from ${\cal A}^*$ 
to ${\cal B}^*$ is a map $\varphi$ from ${\cal A}^*$ to ${\cal B}^*$, such that, 
for any two words $u$ and $v$, one has $\varphi(uv) = \varphi(u)\varphi(v)$.
A morphism of ${\cal A}^*$ is a morphism from ${\cal A}^*$ to itself.

If there exists a positive integer $k$ such that $\varphi(a)$ has length $k$
for all $a \in {\cal A}$,
the morphism $\varphi$ is said to be {\em $k$-uniform}.

\end{definition}

\begin{remark}
A morphism $\varphi$ from ${\cal A}^*$ to ${\cal B}^*$ is determined by the 
values of $\varphi(a)$ for $a \in {\cal A}$. Namely, if the word $u$ is equal 
to $a_1 a_2 \cdots a_n$ with $a_j \in {\cal A}$, then $\varphi(u) = 
\varphi(a_1) \varphi(a_2) \cdots \varphi(a_n)$.
\end{remark}

\begin{definition}
An infinite sequence $(a_n)_{n \geq 0}$ taking values in the alphabet 
${\cal A}$ is said to be {\em pure morphic} if there exist a morphism
$\varphi$ of ${\cal A}^*$ and a word $x \in {\cal A}^*$ such that
\begin{itemize}
\item the word $\varphi(a_0)$ begins with $a_0$,
      (there exists a word $x$ such that $\varphi(a_0) = a_0 x$);
\item iterating $\varphi$ starting from $x$ never gives the empty word
      (for each integer $\ell$, $\varphi^{\ell}(x) \neq \epsilon$);
\item the sequence of words $\varphi^{\ell}(a_0)$ converges to the
      sequence $(a_n)_{n \geq 0}$ when $\ell \to \infty$.
\end{itemize}
\end{definition}

\begin{remark}
It is immediate that 
$$
\begin{array}{lll}
\varphi(a_0)   &=& a_0 x \\
\varphi^2(a_0) &=& \varphi(\varphi(a_0)) = \varphi(a_0 x) = \varphi(a_0) \varphi(x)
= a_0 x \varphi(x) \\
\varphi^3(a_0) &=& \varphi(\varphi^2(a_0)) = \varphi(a_0 x \varphi(x)) =
\varphi(a_0) \varphi(x) \varphi^2(x) = a_0 x \varphi(x) \varphi^2(x) \\
&\vdots& \\
\varphi^{\ell}(a_0) &=& a_0 x \varphi(x) \varphi^2(x) \cdots \varphi^{\ell-1}(x) \\
&\vdots& \\
\end{array}
$$
\end{remark}

\begin{definition}
An infinite sequence $(a_n)_{n \geq 0}$ with values in the alphabet
${\cal A}$ is said to be {\em morphic} if there exist an alphabet 
${\cal B}$ and an infinite sequence $(b_n)_{n \geq 0}$ on ${\cal B}$
such that
\begin{itemize}
\item the sequence $(b_n)_{n \geq 0}$ is pure morphic;
\item there exists a $1$-uniform morphism from ${\cal B}^*$ to
      ${\cal A}^*$ sending the sequence $(b_n)_{n \geq 0}$ to
      the sequence $(a_n)_{n \geq 0}$ (i.e., the sequence 
      $(a_n)_{n \geq 0}$ is the pointwise image of $(b_n)_{n \geq 0}$).
\end{itemize}
If the morphism making $(b_n)_{n \geq 0}$ morphic is $k$-uniform,
then the sequence $(a_n)_{n \geq 0}$  is said to be {\em $k$-automatic}.
The word ``automatic'' comes from the fact that the sequence $(a_n)_{n \geq 0}$
can be generated by a finite automaton (see \cite{AS} for more details on this topic).
\end{definition}

\begin{remark}
A morphism $\varphi$ of ${\cal A}^*$ can be extended to infinite sequences
with values in ${\cal A}$ by defining
$\varphi((a_n)_{n \geq 0}) = \varphi(a_0 a_1 a_2 \cdots) := 
\varphi(a_0) \varphi(a_1) \varphi(a_2) \cdots$

\noindent
It is easy to see that a pure morphic sequence is a {\em fixed point} of 
(the extension to infinite sequences of) some morphism: actually, with the
notation above, it is {\em the} fixed point of $\varphi$ beginning with 
$a_0$. A pure morphic sequence is also called an {\em iterative fixed point} 
of some morphism (because of the construction of that fixed point), while a 
morphic sequence is the pointwise image of an iterative fixed point of some 
morphism, and a $k$-automatic sequence is the pointwise image of the iterative 
fixed point of a $k$-uniform morphism.
\end{remark}

We can now state the following theorem \cite{ABS,AD}:

\begin{theorem}\label{automatic}
The Hanoi sequence ${\cal S}_{\infty}$ is pure morphic. It is the 
iterative fixed point of the $2$-morphism $\varphi$ on 
$\{a, b, c, \overline{a}, \overline{b}, \overline{c}\}^*$ defined by
$$
\varphi(a) := a \overline{c}, \ \ \varphi(b) := c \overline{b}, \ \ 
\varphi(c) := b \overline{a}, \ \ \varphi(\overline{a}) := a c, \ \ 
\varphi(\overline{b}) := c b, \ \ \varphi(\overline{c}) := b a. 
$$
In particular the sequence ${\cal S}_{\infty}$ is $2$-automatic.
\end{theorem}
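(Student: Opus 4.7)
My plan is to prove the theorem by induction on $N \geq 0$, establishing that $\varphi^N(a)$ coincides with the length-$2^N$ prefix of ${\cal S}_{\infty}$. Since $\varphi(a) = a \overline{c}$ begins with $a$, this identifies ${\cal S}_{\infty}$ as the iterative fixed point of $\varphi$, after which the $2$-automaticity is immediate from the definition because $\varphi$ is $2$-uniform. The base case is immediate: $\varphi^0(a) = a$ is the first letter of ${\cal S}_{\infty}$. Writing ${\cal S}_{\infty} = s_0 s_1 s_2 \cdots$, the inductive hypothesis gives $\varphi^N(a) = s_0 s_1 \cdots s_{2^N-1}$, so that
$$
\varphi^{N+1}(a) \;=\; \varphi(s_0)\,\varphi(s_1)\cdots\varphi(s_{2^N - 1})
$$
has length $2^{N+1}$, and the inductive step reduces to the single identity $\varphi(s_k) = s_{2k}\, s_{2k+1}$ for every $k \geq 0$.

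I would then split this identity into its two letters. By direct inspection of the six defining rules, the second letter of $\varphi(s)$ is $\tau(s)$, where $\tau$ is the involutive morphism swapping pegs II and III, i.e., $\tau(a) = \overline{c}$, $\tau(b) = \overline{b}$, $\tau(c) = \overline{a}$, with the remaining three rules determined by involution. The identity $s_{2k+1} = \tau(s_k)$ reflects the structural fact that the odd-position moves of ${\cal S}_{N+1}$ are precisely the non-smallest-disk moves and, ignoring the smallest disk, solve the Hanoi puzzle on the $N$ larger disks; since adding one more disk flips the destination peg (II $\leftrightarrow$ III), this sub-puzzle equals $\tau({\cal S}_N)$. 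For the first letter, inspection of $\varphi$ shows that the first letter of $\varphi(s)$ depends only on the pair of pegs involved in $s$: both $a$ and $\overline{a}$ give $a$, both $b$ and $\overline{b}$ give $c$, both $c$ and $\overline{c}$ give $b$. Olive's rule gives $s_{2k} \in \{a, b, c\}$ cycling with $k \bmod 3$, so the first-letter identity reduces to the claim that the pair of pegs involved in $s_k$ is determined by $k \bmod 3$: it is the pair I--II, I--III, or II--III as $k \equiv 0, 1, 2$, respectively.

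The main obstacle is this last congruence on peg-pairs. I would prove it by writing $k + 1 = 2^{d - 1}(2m - 1)$, so that $s_k$ is the $m$-th move of disk $d$. A classical consequence of the recursive algorithm, provable by a short induction on $d$, is that disk $d$ cycles through the three pegs in one direction when $d$ is odd and in the opposite direction when $d$ is even. A short mod-$3$ computation then closes the argument: $2^{d-1} \equiv (-1)^{d-1} \pmod 3$ exactly compensates for the switch in cycle direction with the parity of $d$, so that the peg-pair of the $m$-th move of disk $d$ depends on $k \bmod 3$ alone, uniformly in $d$ and $m$. Combining this with the case-by-case matching of $\varphi$ against $\tau$ and Olive across the six alphabet letters closes the induction, and the final assertion about $2$-automaticity follows at once.
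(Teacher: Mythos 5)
Your proposal is correct, and it supplies a proof where the paper gives none: the paper only presents the pairwise grouping of ${\cal S}_{\infty}$ as an observed pattern and defers the proof of Theorem~\ref{automatic} to \cite{ABS,AD}. Your reduction to the single identity $\varphi(s_k)=s_{2k}\,s_{2k+1}$ is exactly that observed pattern, and your two-part verification of it is sound. For the odd positions, the second letter of $\varphi$ is indeed the involution $\tau$ exchanging pegs II and III, and $s_{2k+1}=\tau(s_k)$ follows from Olive's parity rule, deletion of the smallest disk, a move count, and the uniqueness of the optimal solution; you leave uniqueness implicit, but it is the step that identifies the residual move sequence with $\tau({\cal S}_N)$, so it deserves a sentence. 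For the even positions, the first letter of $\varphi$ depends only on the unordered peg pair of its argument, and your computation with $k+1=2^{d-1}(2m-1)$, $2\equiv-1\pmod 3$, and the alternating rotation direction of disk $d$ correctly yields that this pair is I--II, I--III, II--III according as $k\equiv 0,1,2\pmod 3$; I checked all six letters and both parities of $d$, and the cases match. The route taken in the cited literature (and implicit in the paper's recursive setup) is instead an induction on $N$ on the word equations ${\cal S}_{N+1}={\cal S}_N\,x_N\,\sigma({\cal S}_N)$ with $\sigma$ a suitable peg permutation, which needs no ruler-function or rotation-direction lemmas; your argument requires those classical facts as inputs (each provable by the short inductions you indicate) but in return explains, disk by disk, why each of the six images under $\varphi$ has the form it does.
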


\begin{remark}
Using the automaton-based
formulation of Theorem~\ref{automatic} above (see, e.g.,
\cite{AD}), it is possible to prove that the $j$th move in the algorithm 
for the optimal solution of the Tower of Hanoi
can be determined from the binary expansion of $j$, hence ``with 
eyes closed'' (i.e., without looking at the towers), and with bounded memory 
(the total needed memory is essentially remembering the morphism above, 
which does not depend on the number of disks).

It may also be worth noting that the Tower of Hanoi sequence is {\em squarefree};
it contains no block of moves $w$ immediately followed by another occurrence
of the same block \cite{AARS}. Also note that this is not the case for all
variations on this puzzle: for example the lazy Tower of Hanoi sequence
(see Remark~\ref{variations} and Theorem~\ref{allsap}) is not squarefree, since 
it begins with $a \ b \ a \ \overline{b} \ \overline{a} \ b \ a \ b \ a \cdots$.

\end{remark}

\section{Another ``mechanical'' way of generating the sequence of moves in 
${\cal S}_{\infty}$}

We begin with an informal definition (for more details the reader can 
look at \cite{AB} and the references therein).
{\em Toeplitz sequences} or {\em Toeplitz transforms} of sequences are obtained 
by starting from a periodic sequence on an alphabet 
$\{a_1, a_2, \ldots, a_r, \diamond\}$, where $\diamond$ is a marked 
symbol called ``hole''. Then all the holes in the sequence are replaced 
in order by the terms of a periodic sequence with holes on the same 
alphabet (possibly the same sequence). The process is iterated. If none
of the periodic sequences used in the construction begins with a
$\diamond$, the process converges and yields a {\em Toeplitz sequence}.

A classical example is the paperfolding sequence that results from
folding a strip of paper on itself infinitely many times, and from
looking at the pattern of
up and down creases after unfolding (see, e.g., \cite[p.~155]{AS}). 
This sequence can also be constructed by a Toeplitz transform as 
follows. Start with the $3$-letter alphabet $\{0, 1, \diamond\}$.
Take the sequence 
$(0 \ \diamond \ 1 \ \diamond)^{\infty} := 
0 \ \diamond \ 1 \diamond \ 0 \ \diamond \ 1 \ \diamond \ 0 \ \cdots$.
Replace the sub-sequence of diamonds by the sequence 
$(0 \ \diamond \ 1 \ \diamond)^{\infty}$ itself, and iterate the process.
This yields successively
$$
\begin{array}{llllllllllllllllllll}
&0 \ &\diamond \ &1 &\diamond \ &0 \ &\diamond \ &1 \ &\diamond \ &0 \ 
&\diamond \ &1 &\diamond \ &0 \ &\diamond \ &1 \ &\cdots
&=& &(0 \ \diamond \ 1 \ \diamond)^{\infty}
\\
&0 \ &0 \ &1 \ &\diamond \ &0 \ &1 \ &1 \ &\diamond \ &0 \
&0 \ &1 \ &\diamond \ &0 \ &1 \ &1 \ &\cdots
&=& &(0 \ 0 \ 1 \ \diamond \ 0 \ 1 \ 1 \ \diamond)^{\infty}
\\
&0 \ &0 \ &1 \ &0 \ &0 \ &1 \ &1 \ &\diamond \ &0 \
&0 \ &1 \ &1 \ &0 \ &1 \ &1 \ &\cdots
&=& &(0 \ 0 \ 1 \ 0 \ 0 \ 1 \ 1 \ \diamond \
0 \ 0 \ 1 \ 1 \ 0 \ 1 \ 1 \ \diamond)^{\infty}
\\
&\vdots
\\
\end{array}
$$
After having applied this process an infinite number of times, there
are no $\diamond$ left. The limit sequence is equal to the paperfolding
sequence
$$
0 \ 0 \ 1 \ 0 \ 0 \ 1 \ 1 \ 0 \ 0 \
0 \ 1 \ 1 \ 0 \ 1 \ 1 \ \cdots .
$$

The Hanoi sequence ${\cal S}_{\infty}$ can be constructed in a
similar way. Take the $7$-letter alphabet 
$\{a, b, c, \overline{a}, \overline{b}, \overline{c}, \diamond\}$.
Start with the sequence $(a \ \overline{c} \ b \ \diamond \ c \ \overline{b} 
\ a \ \diamond \ b \ \overline{a} \ c \ \diamond)^{\infty}$.
Replace the sequence of holes by the sequence itself. Then iterate the 
process. This gives sequences with ``fewer and fewer holes'' and ``more
and more coinciding with'' the Hanoi sequence ${\cal S}_{\infty}$, namely
$$
\begin{array}{cccccccccccccccccccccccccccccccccccccc}
&a &\overline{c} &b &\diamond &c &\overline{b} &a &\diamond 
&b &\overline{a} &c &\diamond &a &\overline{c} &b &\diamond 
&c &\overline{b} &a &\diamond &b &\overline{a} &c &\diamond 
&a &\overline{c} &b &\diamond &c &\overline{b} &a &\diamond 
&b &\overline{a} &c &\diamond &\cdots 
\\
&a &\overline{c} &b &a &c &\overline{b} &a &\overline{c}
&b &\overline{a} &c &b &a &\overline{c} &b &\diamond  
&c &\overline{b} &a &c &b &\overline{a} &c &\overline{b}
&a &\overline{c} &b &a &c &\overline{b} &a &\diamond  
&b &\overline{a} &c &b &\cdots
\\
&a &\overline{c} &b &a &c &\overline{b} &a &\overline{c}
&b &\overline{a} &c &b &a &\overline{c} &b &a
&c &\overline{b} &a &c &b &\overline{a} &c &\overline{b}
&a &\overline{c} &b &a &c &\overline{b} &a &\overline{c}
&b &\overline{a} &c &b &\cdots
\\
&\vdots 
\end{array}
$$
The following theorem was proved in \cite{AD}.

\begin{theorem}
The infinite Hanoi sequence ${\cal S}_{\infty}$ is equal to the 
Toeplitz transform obtained by starting from the sequence
$(a \ \overline{c} \ b \ \diamond \ c \ \overline{b} \ a \ \diamond \
b \ \overline{a} \ c \ \diamond)^{\infty}$, replacing the $\diamond$
by the elements of the sequence itself, then iterating the process
an infinite number of times.
\end{theorem}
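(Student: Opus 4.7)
The plan is to show $\mathcal{S}_\infty = T$ by verifying that both satisfy the same uniquely-characterizing recursion. Unwinding the Toeplitz iteration, the $j$th hole surviving the first $m$ rounds sits at position $4^m j$ and is ultimately filled with $P^\infty(j)$, where $P = a\,\overline{c}\,b\,\diamond\,c\,\overline{b}\,a\,\diamond\,b\,\overline{a}\,c\,\diamond$. Equivalently, $T$ is the unique sequence satisfying (i) $T(n) = P^\infty(n)$ whenever $n \not\equiv 0 \pmod 4$, and (ii) $T(4k) = T(k)$ for every $k \geq 1$. It is therefore enough to check that $\mathcal{S}_\infty$ obeys (i) and (ii).

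The first step is to iterate the morphism $\varphi$ of Theorem~\ref{automatic} twice and verify by direct calculation the identity
\[
\varphi^2(\ell) \;=\; w_{|\ell|}\,\ell, \qquad \ell \in \{a,b,c,\overline{a},\overline{b},\overline{c}\},
\]
where $|\ell|$ denotes $\ell$ with any overline removed and $w_a = a\,\overline{c}\,b$, $w_b = b\,\overline{a}\,c$, $w_c = c\,\overline{b}\,a$. Since $\mathcal{S}_\infty$ is fixed by $\varphi$, hence also by the $4$-uniform morphism $\varphi^2$, the length-$4$ block $\mathcal{S}_\infty(4k-3)\cdots\mathcal{S}_\infty(4k)$ equals $\varphi^2(\mathcal{S}_\infty(k)) = w_{|\mathcal{S}_\infty(k)|}\,\mathcal{S}_\infty(k)$. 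Reading off the last letter of this block immediately gives property (ii) for $\mathcal{S}_\infty$.

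For property (i), I identify the ``type'' sequence $\pi(\mathcal{S}_\infty)$, where $\pi$ is the letter-to-letter map erasing overlines. Direct inspection of the six images of $\varphi$ shows $\pi\circ\varphi = \widetilde{\varphi}\circ\pi$, with $\widetilde{\varphi}$ the morphism on $\{a,b,c\}^*$ defined by $\widetilde{\varphi}(a) = ac$, $\widetilde{\varphi}(b) = cb$, $\widetilde{\varphi}(c) = ba$. Hence $\pi(\mathcal{S}_\infty)$ is the unique fixed point of $\widetilde{\varphi}$ starting with $a$; since $\widetilde{\varphi}(acb) = ac\cdot ba\cdot cb = (acb)^2$, the periodic word $(acb)^\infty$ is itself a fixed point of $\widetilde{\varphi}$, and by uniqueness $\pi(\mathcal{S}_\infty) = (acb)^\infty$. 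Therefore $|\mathcal{S}_\infty(k)|$ takes the values $a, c, b$ for $k$ congruent to $1, 2, 0 \pmod 3$ respectively. Writing $k = 3j+r$ accordingly, the triple $w_{|\mathcal{S}_\infty(k)|}$ occupies positions $12j + \{1,2,3\}$, $12j + \{5,6,7\}$, $12j + \{9,10,11\}$ and equals $a\,\overline{c}\,b$, $c\,\overline{b}\,a$, $b\,\overline{a}\,c$, exactly matching the nine non-hole entries of one period of $P$.

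The main obstacle is the combination of the two structural facts $\varphi^2(\ell) = w_{|\ell|}\,\ell$ and $\pi(\mathcal{S}_\infty) = (acb)^\infty$; together they express that each length-$4$ block of $\mathcal{S}_\infty$ is a cyclically shifted length-$3$ prefix followed by a copy of the ``parent'' letter, which is precisely the Toeplitz template. Once both are established, the verification of (i) and (ii) reduces to a routine check of residues modulo~$12$, and the uniqueness of a sequence satisfying those two properties yields $\mathcal{S}_\infty = T$.
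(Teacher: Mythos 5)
Your proof is correct. The paper itself gives no argument for this theorem (it simply cites Allouche--Dress), so there is nothing internal to compare against, but your route is the natural one and all the key identities check out: $\varphi^2(\ell)=w_{|\ell|}\,\ell$ holds for all six letters (e.g.\ $\varphi^2(a)=\varphi(a\overline{c})=a\overline{c}\,ba$, $\varphi^2(\overline{a})=\varphi(ac)=a\overline{c}\,b\overline{a}$), the projection $\pi$ erasing bars does intertwine $\varphi$ with $\widetilde{\varphi}$ so that $\pi({\cal S}_\infty)=(acb)^\infty$, and the characterization of the Toeplitz limit by (i) $T(n)=P^\infty(n)$ for $4\nmid n$ together with (ii) $T(4k)=T(k)$ does determine $T$ uniquely since every $n$ is $4^mj$ with $4\nmid j$. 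The residue-mod-$12$ bookkeeping at the end matches the template $a\,\overline{c}\,b\,\diamond\,c\,\overline{b}\,a\,\diamond\,b\,\overline{a}\,c\,\diamond$ exactly as you state, so the argument is complete.
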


\section{Classical sequences hidden behind the Hanoi sequence}

Several classical sequences are linked to the Hanoi sequence. 
We will describe some of them in this section.

\subsection{Period-doubling sequence}\label{per-dou}

A binary sequence ${\cal T}$ can be deduced from ${\cal S}_{\infty}$ by
replacing each of $a$, $b$, $c$ by $1$ and each of $\overline{a}$, 
$\overline{b}$, $\overline{c}$ by $0$ (i.e., the non-barred letters by 
$1$ and the barred letters by $0$), thus obtaining
$$
\begin{array}{lllllllllllllllllll}
{\cal S}_{\infty} &=& &a \ &\overline{c} \ &b \ &a \ &c \ &\overline{b} \ &a \
&\overline{c} \ &b \ &\overline{a} \ &c \ &b \ &a \ &\overline{c} \ &b \ &\cdots \\
{\cal T} &=& &1 \ &0 \ &1 \ &1 \ &1 \ &0 \ &1 \ &0 \ &1 \ &0 \ &1 \ &1 \ &1  \
&0 \ &1 \ &\cdots \\
\end{array}
$$
It is not difficult to prove \cite{AD} that ${\cal T}$ is the iterative
fixed point of the morphism $\omega$ on $\{0, 1\}^*$ defined by $\omega(1) := 10$,
$\omega(0) := 11$. This iterative fixed point is known as the {\em
period-doubling
sequence}. It was introduced in the study of iterations of unimodal continuous
functions in relation with Feigenbaum cascades
(see, e.g., \cite[pp.~176, 209]{AS}).

\subsection{Double-free subsets}\label{dou-fre}
    
Define a sequence of integers ${\cal U}$ by counting for each term of
${\cal S}_{\infty}$ the cumulative number of the
non-barred letters up to this term
$$
\begin{array}{lllllllllllllllllll}
{\cal S}_{\infty} &=& &a \ &\overline{c} \ &b \ &a \ &c \ &\overline{b} \ &a \
&\overline{c} \ &b \ &\overline{a} \ &c \ &b \ &a \ &\overline{c} \ &b \ &\cdots \\
{\cal U} &=& &1 \ &1 \ &2 \ &3 \ &4 \ &4 \ &5 \ &5 \ &6 \ &6 \ &7 \ &8 \ &9  \
&9 \ &10 \ &\cdots \\
\end{array}
$$
The sequence ${\cal U}$ is equal to the sequence of maximal sizes of a subset
$S$ of $\{1, 2, \ldots, n\}$ with the property that if $x$ is in $S$ then $2x$
is not.  (The sequences ${\cal T}$ in Subsection~\ref{per-dou} above and
${\cal U}$ are respectively called {\tt A035263} and {\tt A050292} in
Sloane's {\em Encyclopedia} \cite{Sloane},
where it is mentioned that {\tt A050292} is the summatory
function of {\tt A035263}).

\subsection{Prouhet-Thue-Morse sequence} \label{ptm}

Reducing the sequence ${\cal U}$ in Subsection~\ref{dou-fre} modulo $2$
(or, equivalently, taking the summatory function modulo $2$ of the sequence 
${\cal T}$ in Subsection~\ref{per-dou} above) yields a sequence ${\cal V}$
$$
\begin{array}{lllllllllllllllllll}
{\cal U} &=& &1 \ &1 \ &2 \ &3 \ &4 \ &4 \ &5 \ &5 \ &6 \ &6 \ &7 \ &8 \ &9  \
&9 \ &10 \ &\cdots \\
{\cal V} &=& &1 \ &1 \ &0 \ &1 \ &0 \ &0 \ &1 \ &1 \ &0 \ &0 \ &1 \ &0 \ &1  \
&1 \ &0 \ &\cdots \\
\end{array}
$$
which is the celebrated (Prouhet-)Thue-Morse sequence (see, e.g., \cite{ubi}),
deprived of its first $0$. Recall that the Prouhet-Thue-Morse sequence can
be defined as the unique iterative fixed point, beginning with $0$, of the 
morphism defined by $0 \to 01$, $1 \to 10$.

\subsection{Other classical sequences related to Hanoi}

Other classical sequences are related to the Tower of Hanoi or to 
its variations. 
We mention here the Sierpi\'nski gasket \cite{Hinz5},
the Pascal triangle \cite{Hinz2} (see also \cite{Hinz4} and 
the references therein), the Stern diatomic sequence \cite{Hinz4},
but also the Stirling numbers of the second kind \cite{KMP1}, 
the second order Eulerian numbers, the Lah numbers, and the Catalan 
numbers \cite{KMP2}.

\section{Variations on the Tower of Hanoi and morphisms}

As indicated in Remark~\ref{variations}, several variations on the 
Tower of Hanoi can be found in the literature. We will first indicate a 
generalization of Theorem~\ref{automatic}. Then we will give a new result
on the classical Tower of Hanoi and one of its avatars, and a new result 
on automatic sequences.

\subsection{Tower of Hanoi with restricted moves}

There are exactly five variations deduced ``up to isomorphism'' from the 
classical Hanoi puzzle by restricting the permitted moves, i.e., by 
forbidding some fixed subset of the set of moves 
$\{a, b, c, \overline{a}, \overline{b}, \overline{c}\}$; see \cite{Sapir}. 
The following result was proven in \cite{AllSap} (also see \cite{ABS, AD}
for the classical case and \cite{AD} for the cyclic case).

\begin{theorem}\label{allsap}
The five restricted Tower of Hanoi problems
give rise to infinite morphic sequences 
of moves, whose appropriate truncations describe the transfer of any given 
number of disks. Furthermore two of these infinite sequences are actually 
automatic sequences, namely the classical Hanoi sequence and the lazy Hanoi
sequence, which are, respectively, $2$-automatic and $3$-automatic. 
\end{theorem}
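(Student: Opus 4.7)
The plan is to invoke Sapir's classification \cite{Sapir} of the five restricted move sets as the starting point, and then, for each variant $V_i$ ($i=1,\ldots,5$), follow the three-step recipe already used in the classical case of Theorem~\ref{automatic}: extract a recursive optimal algorithm, stabilize it into an infinite limit sequence ${\cal S}_\infty^{(i)}$, and identify a morphism whose (pointwise image of its) iterative fixed point is ${\cal S}_\infty^{(i)}$. Uniformity would then be checked separately in the two cases where it is claimed.

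For the recursive structure, I would, for each variant and each pair $(s,t)$ of pegs between which a transfer is achievable using the allowed moves, write down by induction on $N$ the optimal word $M_N^{(s,t)}$ transferring $N$ disks from $s$ to $t$. The standard ``move the $N$ smaller disks aside, relocate the largest disk, rebuild'' argument gives a recursion
\[
M_{N+1}^{(s,t)} \;=\; M_N^{(s,u_1)}\, m_1\, M_N^{(u_1,u_2)}\, m_2 \cdots m_{r_i}\, M_N^{(u_{r_i},t)},
\]
where the $m_j$ are individual moves of the largest disk and the intermediate pegs $u_j$ are forced by the allowed-move graph. For the classical variant $r_i=2$; for the lazy variant the absence of direct I$\leftrightarrow$III moves forces $r_i=3$ with peg II as a hub; for the remaining three variants the expansion factor varies across the various $(s,t)$ pairs. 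The limit ${\cal S}_\infty^{(i)}$ then exists by the prefix-stability argument already used in the classical case: after a parity (or small modular) adjustment fixing the target peg as a function of $N \bmod k_i$, the canonical $(N+1)$-disk optimal sequence begins with the canonical $N$-disk one, so a well-defined infinite word whose prefix of length $T_i(N)$ performs the $N$-disk transfer emerges.

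To obtain the morphism, I would tag each physical move with additional state tracking its role in the recursion (which block of the next-level decomposition it comes from, which transfer type $(s,t)$ it belongs to) so as to obtain an enlarged alphabet ${\cal A}_i$; on ${\cal A}_i$ a single morphism $\varphi_i$ coherently refines each tagged letter into a fixed word encoding one step of the $(N+1)$-level recursion. Pointwise projection back to $\{a,b,c,\overline a,\overline b,\overline c\}$ realizes ${\cal S}_\infty^{(i)}$ as morphic. For the classical variant, Theorem~\ref{automatic} already supplies a $2$-uniform morphism directly on the 6-letter alphabet. For the lazy variant, $r_i=3$ and $T_{\text{lazy}}(N+1)=3\,T_{\text{lazy}}(N)+2$ (so $T_{\text{lazy}}(N)=3^N-1$), and I would verify that the tagged morphism $\varphi_{\text{lazy}}$ is $3$-uniform and that ${\cal S}_\infty^{(\text{lazy})}$ is its pointwise projection, establishing $3$-automaticity.

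The main obstacle I expect is the choice of enlarged alphabet, especially in the lazy case: too coarse a tagging makes $\varphi_{\text{lazy}}$ ill-defined (the same physical move in different recursive contexts demands different images), while too fine a tagging risks an infinite alphabet. The correct tagging must capture exactly the relevant local recursive context and no more; verifying that a \emph{finite} such tagging yields a $3$-uniform morphism whose projection really equals ${\cal S}_\infty^{(\text{lazy})}$ is the central technical step. The three intermediate variants face the same bookkeeping challenge but, without a uniformity requirement, morphic-ness on a finite tagged alphabet suffices.
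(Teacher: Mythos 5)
Your outline matches the approach actually taken for this theorem: the paper does not prove it but quotes it from \cite{AllSap}, where the argument is precisely the recipe you describe --- Sapir's classification \cite{Sapir} of the five restricted move sets, the recursive decomposition of the optimal transfer words, prefix-stability yielding an infinite limit word, and an explicit morphism whose iterative fixed point (possibly after a $1$-uniform projection) equals that word. Two small points. For the two automatic cases your tagging machinery is unnecessary: the $2$-uniform morphism $\varphi$ of Theorem~\ref{automatic} and the $3$-uniform morphism $\lambda$ displayed after Theorem~\ref{allsap} act directly on the move alphabets $\{a,b,c,\overline{a},\overline{b},\overline{c}\}$ and $\{a,b,\overline{a},\overline{b}\}$, so the classical and lazy sequences are pure morphic fixed points and their automaticity is immediate; the enlarged-alphabet device is needed only for the other three variants (e.g.\ the cyclic case, where the six-letter alphabet $\{f,g,h,u,v,w\}$ projects two-to-one onto $\{a,b,c\}$). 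Also, in your recursion the largest disk moves once in the classical variant and twice in the lazy one (giving three recursive blocks and $T_{\mathrm{lazy}}(N+1)=3\,T_{\mathrm{lazy}}(N)+2$, hence $3^N-1$ as you correctly compute); your values $r_i=2$ and $r_i=3$ count blocks rather than moves of the largest disk. Neither issue affects the soundness of the plan, but the substantive content of the theorem is the explicit exhibition and verification of the five morphisms, which your proposal rightly identifies as the central step and then defers.
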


We give below the examples of the cyclic Tower of Hanoi
and the lazy Tower of Hanoi (defined above in Remark~\ref{variations}).

\begin{itemize}

\item 
There exists an infinite sequence over
the alphabet $\{a, b, c\}$ that is the common
limit of finite minimal sequences of moves for the {\em cyclic} Tower
of Hanoi that 
allow us to transfer $N$ disks from peg~$I$ to peg~$II$ or from peg~$I$ to peg~$III$. 
Furthermore this sequence is morphic: it is the image under the map ($1$-uniform morphism)
$F: \{f, g, h, u, v, w\} \to \{a, b, c\}$ where $F(f) = F(w) := a$,
$F(g) = F(u) := c$, $F(h) = F(v) := b$ of the iterative fixed point of the morphism 
$\psi$ defined on $\{f, g, h, u, v, w\}$ by
$$
\begin{array}{llll}
&\psi(f) := fvf, \ \ &\psi(g) := gwg, \ \ &\psi(h) := huh, \\
&\psi(u) := fg, \ \ &\psi(v) := gh, \ \ &\psi(w) := hf
\end{array}
$$

\item
the {\em lazy\,} Hanoi sequence is the iterative fixed point beginning with $a$ 
of the morphism $\lambda$ defined on $\{a, b, \overline{a}, \overline{b}\}^*$ by
$$
\lambda(a) := a \ b \ a, \ \ \lambda(\overline{a}) := a \ b \ \overline{a}, \ \
\lambda(b) := \overline{b} \ \overline{a} \ b, \ \
\lambda(\overline{b}) := \overline{b} \ \overline{a} \ \overline{b}.
$$
In particular it is $3$-automatic.
\end{itemize}

\subsection{New results}

\begin{definition}
We say a sequence is {\em non-uniformly pure morphic} if it is the iterative
fixed point of a non-uniform morphism. We say that a sequence is
{\em non-uniformly morphic} if it is the image (under a $1$-uniform morphism) 
of a non-uniformly pure morphic sequence. 
\end{definition}
For example, the sequence $abaababa \cdots$ generated by the morphism 
$a \rightarrow ab$, $b \rightarrow a$, is non-uniformly pure morphic. This sequence 
is known as the {\em (binary) Fibonacci sequence}, since it is also equal to the limit 
of the sequence of words $(u_n)_{n \geq 0}$ defined by $u_0 := a$, 
$u_1 := ab$, $u_{n+2} := u_{n+1} u_n$ for each $n \geq 0$. (Of course, as 
M. Mend\`es France pointed out, we certainly assume that the alphabet of the 
non-uniform morphism involved in the above definition is the same as the minimal 
alphabet of its fixed point. For example the morphism $0 \to 01$, $1 \to 10$, 
$2 \to 1101$, whose iterative fixed point is the Thue-Morse sequence, does not 
make that sequence non-uniformly morphic.)

Although most of the non-uniformly morphic sequences are not automatic (e.g.,
the binary Fibonacci sequence is not automatic), some sequences can be 
simultaneously automatic and non-uniformly morphic. An example is the 
sequence ${\cal Z}$ formed by the lengths of the strings of $1$'s between 
two consecutive zeros in the Thue-Morse sequence $0 {\cal V}$ (whose 
definition is recalled in Subsection~\ref{ptm}).
$$
\begin{array}{lll}
0 {\cal V} &=& 0 \ 1 \ 1 \ 0 \ 1 \ 0 \ 0 \ 1 \ 1 \ 0 \ 0 \ 1 \ 0 \ 
                1 \ 1 \ 0 \ \cdots \\
           &=& 0 \ (1 1) \ 0 \ (1) \ 0 \ ( \ ) \  0 \ (1 1) \ 0 \ 
               ( \ ) 0 \ (1) \ 0 \ (1 1) \ 0 \ \cdots \\ 
{\cal Z}   &=& 2 \ 1 \ 0 \ 2 \ 0 \ 1 \ 2 \ \cdots \\
\end{array}
$$
The sequence ${\cal Z}$ is both \cite{Berstel} the iterative fixed 
point of the morphism $2 \to 210, \ \ 1 \to 20, \ \ 0 \to 1$
and the image under the map $x \to x \bmod 3$ of the iterative fixed point
of the $2$-morphism $2 \to 21, \ \ 1 \to 02, \ \ 0 \to 04, \ \ 4 \to 20$.

We have just seen that the five variations of the Tower of Hanoi 
(with restricted moves) are morphic; two of them are actually 
automatic (namely the classical and the lazy Tower of Hanoi), and 
one is not (the cyclic Tower of Hanoi, see \cite{All}). It is asked 
in \cite{AllSap} whether it is true that the other two variations 
are not $k$-automatic for any $k \geq 2$.
Reversing that question in some sense,
we could instead ask whether the classical Hanoi sequence and the 
lazy Hanoi sequence (which are, respectively, $2$-automatic and 
$3$-automatic) are {\em also} non-uniformly morphic.
The following result seems to be new.

\begin{theorem}\label{newHanoi}
The classical Hanoi sequence is non-uniformly pure morphic:
it is the iterative fixed point of the (non-uniform) morphism $\xi$ defined on 
$\{a, b, c, \overline{a}, \overline{b}, \overline{c}\}^*$ by
$$
\begin{array}{llll}
&\xi(a) := a \overline{c} b, \ \ &\xi(b) := \overline{b}, \ \ 
&\xi(c) := \overline{a} c, \\
&\xi(\overline{a}) := a c b, \ \ &\xi(\overline{b}) := b, \ \
&\xi(\overline{c}) := a c. \\
\end{array}
$$
The lazy Hanoi sequence is non-uniformly pure morphic:
it is the iterative fixed point of the (non-uniform) morphism $\eta$ 
defined on $\{a, b, \overline{a}, \overline{b}\}^*$ by
$$
\begin{array}{lll}
&\eta(a) := a \ b \ a \ \overline{b}, \ \ &\eta(b) := \overline{a} \ b, \\
&\eta(\overline{a}) := a \ b \ \overline{a} \ \overline{b}, \ \
&\eta(\overline{b}) := \overline{a} \ \overline{b}. \ \
\end{array}
$$
\end{theorem}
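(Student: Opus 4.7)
The plan is to deduce both parts of the theorem from the already-known fixed-point properties of the uniform morphisms $\varphi$ (Theorem~\ref{automatic}) and $\lambda$ (Theorem~\ref{allsap}), by identifying a coarse ``type'' structure of the sequences that triggers a telescoping identity relating $\xi$ (resp.\ $\eta$) with $\varphi$ (resp.\ $\lambda$).

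For the classical case, first I would let $\pi$ be the ``forget the bar'' map and observe that $\pi\circ\varphi=\mu\circ\pi$, where $\mu$ is the morphism on $\{a,b,c\}$ given by $\mu(a)=ac$, $\mu(b)=cb$, $\mu(c)=ba$. Since $\mu$ fixes the purely periodic sequence $(acb)^{\omega}$, and this is the unique prolongable $\mu$-fixed point starting with $a$, one obtains $\pi({\cal S}_{\infty})=(acb)^{\omega}$. Consequently ${\cal S}_{\infty}$ decomposes into consecutive length-$3$ blocks $T_i=\alpha_i\gamma_i\beta_i$ with $\alpha_i\in\{a,\overline{a}\}$, $\gamma_i\in\{c,\overline{c}\}$, $\beta_i\in\{b,\overline{b}\}$.

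Next I would prove the Key Identity $\xi(\alpha\gamma\beta)=\varphi(\alpha\gamma\beta)$ for every such triple, via three boundary identities read off the definitions (each a two-case check):
$$\xi(\alpha)=\varphi(\alpha)\,b,\qquad b\cdot\xi(\gamma)=\varphi(\gamma)\cdot c,\qquad c\cdot\xi(\beta)=\varphi(\beta).$$
These telescope as
$$\xi(\alpha)\xi(\gamma)\xi(\beta)=\varphi(\alpha)\bigl[b\cdot\xi(\gamma)\bigr]\xi(\beta)=\varphi(\alpha)\varphi(\gamma)\bigl[c\cdot\xi(\beta)\bigr]=\varphi(\alpha\gamma\beta).$$
Writing ${\cal S}_{\infty}=T_1T_2T_3\cdots$ and combining the two steps yields $\xi({\cal S}_{\infty})=\prod_i\xi(T_i)=\prod_i\varphi(T_i)=\varphi({\cal S}_{\infty})={\cal S}_{\infty}$; since $\xi(a)$ begins with $a$ and $\xi$ is non-erasing, ${\cal S}_{\infty}$ is the iterative fixed point of $\xi$.

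For the lazy Hanoi sequence the strategy is parallel and simpler. The corresponding type morphism on $\{a,b\}$ is $a\mapsto aba$, $b\mapsto bab$, fixing $(ab)^{\omega}$, so the lazy sequence decomposes into pairs $\alpha\beta$ with $\alpha\in\{a,\overline{a}\}$, $\beta\in\{b,\overline{b}\}$. The two boundary identities $\eta(\alpha)=\lambda(\alpha)\,\overline{b}$ and $\lambda(\beta)=\overline{b}\cdot\eta(\beta)$ yield $\eta(\alpha\beta)=\lambda(\alpha\beta)$, and the same block argument closes the proof. The only non-mechanical point in the whole argument is spotting the type sequences $(acb)^{\omega}$ and $(ab)^{\omega}$, which are precisely what align the block decomposition with the telescoping identities; everything else reduces to computing $\xi$ or $\eta$ on a handful of individual letters.
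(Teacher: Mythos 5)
Your proof is correct, and while it shares the paper's overall skeleton---factor the sequence into short blocks, show the non-uniform morphism agrees with the known uniform one ($\varphi$, resp.\ $\lambda$) on every block that can occur, and conclude $\xi({\cal S}_{\infty})=\varphi({\cal S}_{\infty})={\cal S}_{\infty}$---it differs in two genuine ways. First, where the paper identifies the admissible length-$3$ blocks by citing the known fact that only the five triples $a\overline{c}b$, $ac\overline{b}$, $\overline{a}cb$, $acb$, $\overline{a}c\overline{b}$ occur (Hinz's theorem), you instead derive the weaker but sufficient structural fact $\pi({\cal S}_{\infty})=(acb)^{\omega}$ from the commutation $\pi\circ\varphi=\mu\circ\pi$ and uniqueness of the prolongable fixed point, and then verify the identity $\xi(\alpha\gamma\beta)=\varphi(\alpha\gamma\beta)$ for all eight triples of that shape via the three telescoping boundary identities (I checked all six two-case verifications; they hold). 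This makes the argument self-contained and reduces the case analysis. Second, for the lazy sequence you use length-$2$ blocks $\alpha\beta$ with $\alpha\in\{a,\overline{a}\}$, $\beta\in\{b,\overline{b}\}$, whereas the paper groups into quadruples and must first argue which eight quadruples can occur (as subblocks of $\lambda$-images of $2$-letter factors); your pair decomposition with the identities $\eta(\alpha)=\lambda(\alpha)\overline{b}$ and $\lambda(\beta)=\overline{b}\,\eta(\beta)$ is strictly simpler and again avoids any appeal to the factor structure beyond the projection onto $(ab)^{\omega}$. The trade-off is that the paper's route records the finer combinatorial information (exactly which blocks occur), which it reuses elsewhere, while yours buys a shorter, citation-free proof. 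Your closing step---that a fixed point of $\xi$ beginning with $a$, with $\xi(a)=a\overline{c}b$ and $\xi$ non-erasing, must be the iterative fixed point---is the standard uniqueness argument and is fine.
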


\begin{proof}
We begin with the classical Hanoi sequence ${\cal S}_{\infty}$. An easy 
computation shows the following five equalities (where $\varphi$ is the 
morphism defined in Theorem~\ref{automatic}).
$$
\begin{array}{lll}
\xi(a \overline{c} b) &=& \varphi(a \overline{c} b), \\
\xi(a c \overline{b}) &=& \varphi(a c \overline{b}), \\
\xi(\overline{a} c b) &=& \varphi(\overline{a} c b), \\
\xi(a c b)            &=& \varphi(a c b), \\ 
\xi(\overline{a} c \overline{b}) &=& \varphi(\overline{a} c \overline{b}). \\
\end{array}
$$
Now, grouping the elements by triples, i.e., writing the sequence 
${\cal S}_{\infty}$ as
$$
{\cal S}_{\infty} = (a \ \overline{c} \ b) \ (a \ c \ \overline{b}) \ 
(a \ \overline{c} \ b ) \ (\overline{a} \ c \ b) \ (a \ \overline{c} \ b) \ \cdots,
$$
we know that only the five triples $a \overline{c} b$, $a c \overline{b}$,
$\overline{a} c b$, $a c b$, and $\overline{a} c \overline{b}$ occur 
(see \cite[Theorem~2]{Hinz3}, where this result is used to construct
a square-free sequence on a $5$-letter alphabet, starting with the classical
Hanoi sequence). Thus
$$
\begin{array}{lll}
\xi({\cal S}_{\infty}) 
&=&
\xi((a \ \overline{c} \ b) \ (a \ c \ \overline{b}) \ (a \ \overline{c} \ b ) \ 
(\overline{a} \ c \ b) \ (a \ \overline{c} \ b) \ \cdots) \\
&=&
\xi(a \ \overline{c} \ b) \ \xi(a \ c \ \overline{b}) \ \xi(a \ \overline{c} \ b ) \ 
\xi(\overline{a} \ c \ b) \ \xi(a \ \overline{c} \ b) \ \cdots \\
&=&
\varphi(a \ \overline{c} \ b) \ \varphi(a \ c \ \overline{b}) \ 
\varphi(a \ \overline{c} \ b ) \ \varphi(\overline{a} \ c \ b) \ 
\varphi(a \ \overline{c} \ b) \ \cdots \\
&=&
\varphi((a \ \overline{c} \ b) \ (a \ c \ \overline{b}) \ (a \ \overline{c} \ b ) \ 
(\overline{a} \ c \ b) \ (a \ \overline{c} \ b) \ \cdots) \\
&=&
\varphi({\cal S}_\infty) = {\cal S}_\infty. \\
\end{array}
$$

Now we look at the lazy Hanoi sequence. Using the morphism $\lambda$
defined in the second example following Theorem~\ref{allsap}, we note that
$$
\begin{array}{lllll}
\eta(a b a \overline{b}) &=& \lambda(a b a \overline{b}), \ \
\eta(a b \overline{a} \overline{b}) &=& \lambda(a b \overline{a} \overline{b}), \\ 
\eta(a \overline{b} \overline{a} b) &=& \lambda(a \overline{b} \overline{a} b), \ \
\eta(a \overline{b} \overline{a} \overline{b}) &=& 
\lambda(a \overline{b} \overline{a} \overline{b}), \\
\eta(\overline{a} b a b) &=& \lambda(\overline{a} b a b), \ \
\eta(\overline{a} \overline{b} a b) &=& \lambda(\overline{a} \overline{b} a b), \\ 
\eta(\overline{a} \overline{b} \overline{a} b) &=&
\lambda(\overline{a} \overline{b} \overline{a} b), \ \ 
\eta(\overline{a} \overline{b} \overline{a} \overline{b}) &=& 
\lambda(\overline{a} \overline{b} \overline{a} \overline{b}). \\
\end{array}
$$
Grouping as above the elements of the lazy Hanoi sequence by quadruples, 
we can write that sequence, say ${\cal H}_{\infty}$ as
$$
{\cal H}_{\infty} = a \ b \ a \ \overline{b} \ \overline{a} \ b \ a \ b \ a \
\overline{b} \ \overline{a} \ \overline{b} \ a \ b \ \cdots 
= (a \ b \ a \ \overline{b}) \ (\overline{a} \ b \ a \ b) \ 
(a \ \overline{b} \ \overline{a} \ \overline{b}) \ (a \ b \cdots 
$$
It is not hard to see that the $4$-letter blocks (also called {\em $4$-letter 
factors}) that can occur in this parenthesized version of ${\cal H}_{\infty}$ 
are among all $4$-letter blocks of ${\cal H}_{\infty}$ beginning with $a$ or 
$\overline{a}$. These blocks are necessarily subblocks of images by $\lambda$ 
of $2$-letter blocks of ${\cal H}_{\infty}$, i.e., of blocks 
$\{ab, a\overline{b}, \overline{a}b, \overline{a}\overline{b},
ba, b \overline{a}, \overline{b}a, \overline{b}\overline{a}\}$. 
Hence these $4$-letter blocks are among 
$a b a \overline{b}$, $a b \overline{a} \overline{b}$, 
$a \overline{b} \overline{a} b$, $a \overline{b} \overline{a} \overline{b}$,
$\overline{a} b a b$, $\overline{a} \overline{b} a b$,
$\overline{a} \overline{b} \overline{a} b$,
$\overline{a} \overline{b} \overline{a} \overline{b}$.
We then have
$$
\begin{array}{lll}
\eta({\cal H}_{\infty}) &=& \eta(a \ b \ a \ \overline{b} \ \overline{a} \ 
b \ a \ b \ a \ \overline{b} \ \overline{a} \ \overline{b} \ a \ b \ 
\overline{a} \ \overline{b} \ \cdots) \\
&=& \eta((a \ b \ a \ \overline{b}) \ (\overline{a} \ b \ a \ b) \
(a \ \overline{b} \ \overline{a} \ \overline{b}) \ 
(a \ b \ \overline{a} \ \overline{b}) \ \cdots ) \\
&=& \eta(a \ b \ a \ \overline{b}) \ \eta(\overline{a} \ b \ a \ b) \
\eta(a \ \overline{b} \ \overline{a} \ \overline{b}) \ 
\eta(a \ b \ \overline{a} \ \overline{b}) \ \cdots \\
&=& \lambda(a \ b \ a \ \overline{b}) \ \lambda(\overline{a} \ b \ a \ b) \
\lambda(a \ \overline{b} \ \overline{a} \ \overline{b}) \ 
\lambda(a \ b \ \overline{a} \ \overline{b}) \ \cdots  \\
&=& \lambda((a \ b \ a \ \overline{b}) \ (\overline{a} \ b \ a \ b) \
(a \ \overline{b} \ \overline{a} \ \overline{b}) \ 
(a \ b \ \overline{a} \ \overline{b}) \ \cdots ) \\
&=& \lambda({\cal H}_{\infty}) = {\cal H}_{\infty}. \mbox{ \qed}\\
\end{array}
$$
\end{proof}

\section{A little more on automatic sequences}

Automatic sequences, such as the classical and the lazy Hanoi sequences,
have numerous properties, in particular number-theoretical properties.
We refer, e.g., to \cite{AS}. We give here a characteristic property
of formal power series on a finite field in terms of automatic sequences
Before doing this let us consider again the period-doubling sequence 
${\cal T}$, i.e., the iterative fixed point of the morphism $\omega$ 
on $\{0, 1\}^*$ defined (see Section~\ref{per-dou}) 
by $\omega(1) := 10$, $\omega(0) := 11$. Let us 
identify $\{0, 1\}^*$ with the field of $2$ elements, ${\mathbb F}_2$.
The definitions of ${\cal T} = (t_n)_{n \geq 0}$ and of $\omega$ show
that, for every $n \geq 0$, $t_{2n} = 1$, $t_{2n+1} = 1 + t_n$.
Hence, denoting by $F$ the formal power series $\sum t_n X^n$ in
${\mathbb F}_2[[X]]$, we have (remember we are in characteristic $2$)
$$
\begin{array}{lll}
F &:=& \displaystyle \sum_{n \geq 0} t_n X^n = 
\sum_{n \geq 0} t_{2n} X^{2n} + \sum_{n \geq 0} t_{2n+1} X^{2n+1} = 
\sum_{n \geq 0} X^{2n} + \sum_{n \geq 0} (1+t_n) X^{2n+1} \\
&=& \displaystyle\frac{1+X}{1-X^2} + X \sum_{n \geq 0} t_n X^{2n} 
= \displaystyle\frac{1}{1-X} + 
X \left(\sum_{n \geq 0} t_n X^n\right)^2 = \frac{1}{1-X} + X F^2.
\end{array}
$$
Remembering once more that we are in characteristic two, this can be 
written
$$
X(1+X)F^2 + (1+X)F + 1 = 0
$$
which means that $F$ is algebraic (at most quadratic -- actually exactly 
quadratic since the sequence ${\cal T}$ is not ultimately periodic) on 
the field of rational functions ${\mathbb F}_2(X)$. This result is a
particular case of a more general result that we give hereafter (see 
\cite{Christol, CKMFR}).

\begin{theorem}
Let ${\mathbb F}_q$ be the finite field of cardinality $q$. Let
$(a_n)_{n \geq 0}$ be a sequence on ${\mathbb F}_q$. Then, the
formal power series $\sum a_n X^n$ is algebraic over the field
of rational functions ${\mathbb F}_q(X)$ if and only if the
sequence $(a_n)_{n \geq 0}$ is $q$-automatic.
\end{theorem}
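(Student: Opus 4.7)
The plan is to prove Christol's theorem \cite{Christol, CKMFR} in both directions. The implication $q$-automatic $\Rightarrow$ algebraic directly generalizes the period-doubling calculation carried out just before the statement. The converse, algebraic $\Rightarrow$ $q$-automatic, is harder and requires an ``Ore equation'' together with an analysis of decimation operators.

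\textbf{From automatic to algebraic.} Let $(a_n)$ be $q$-automatic, with finite $q$-kernel $\{s^{(1)} = (a_n), s^{(2)}, \ldots, s^{(m)}\}$, and set $F_i := \sum_n s^{(i)}_n X^n$. For each $i$ and each $0 \le r < q$, the decimation $(s^{(i)}_{qn+r})_n$ lies in the kernel, say equals $s^{(\pi(i,r))}$. Splitting $F_i$ by residue modulo $q$ and then applying Frobenius (valid because coefficients are in $\mathbb{F}_q$, so $F_j(X^q) = F_j(X)^q$) produces the system
\[
F_i \;=\; \sum_{r=0}^{q-1} X^r \, F_{\pi(i,r)}^{\,q}, \qquad i = 1, \ldots, m,
\]
which is the $q$-ary analogue of the period-doubling identity $F = 1/(1-X) + XF^2$ derived above. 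Viewing these as $m$ polynomial equations in the $m$ unknowns $F_1, \ldots, F_m$ over $\mathbb{F}_q(X)$, one eliminates $F_2, \ldots, F_m$ to obtain a polynomial equation satisfied by $F = F_1$ over $\mathbb{F}_q(X)$; equivalently, one shows that the $\mathbb{F}_q(X)$-span of $\{F, F^q, F^{q^2}, \ldots\}$ is finite-dimensional, which forces an algebraic relation.

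\textbf{From algebraic to automatic.} Suppose $F$ is algebraic. Because $\mathbb{F}_q(X)[F]$ has finite dimension over $\mathbb{F}_q(X)$, the $F^{q^j}$ are $\mathbb{F}_q(X)$-linearly dependent, and clearing denominators yields an Ore equation
\[
A_0(X) F + A_1(X) F^q + \cdots + A_d(X) F^{q^d} = 0, \qquad A_j \in \mathbb{F}_q[X], \; A_0 \ne 0.
\]
Introduce the Cartier decimation operators $\Lambda_r \colon \sum c_n X^n \mapsto \sum c_{qn+r} X^n$ for $0 \le r < q$; moving around in the $q$-kernel is the same as iterating the $\Lambda_r$ on $F$. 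The key identity
\[
\Lambda_r(B \cdot G^q) \;=\; \Lambda_r(B) \cdot G,
\]
immediate from $G^q(X) = G(X^q)$, combined with the Ore equation, rewrites each $\Lambda_r(F)$ as an $\mathbb{F}_q(X)$-linear combination of $F, F^q, \ldots, F^{q^{d-1}}$; thus the $\mathbb{F}_q(X)$-vector space $V$ these span is stable under every $\Lambda_r$. To upgrade stability of the finite-dimensional $V$ to finiteness of the $q$-kernel as a \emph{set}, I would construct a finitely generated $\mathbb{F}_q[X]$-submodule $M \subseteq V$ containing $F$ and stable under the $\Lambda_r$, using $A_0$ to control denominators; a pigeonhole argument inside $M$, or inside a quotient $M/A_0^k M$, then bounds the number of distinct iterates of $F$ reachable by the $\Lambda_r$.

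\textbf{Main obstacle.} The most delicate step is the last one: upgrading the finite-dimensional $\mathbb{F}_q(X)$-span $V$ to an actually finite $q$-kernel. Finite dimension alone will not suffice, since even a one-dimensional $\mathbb{F}_q(X)$-vector space is infinite; the argument must simultaneously invoke the Noetherianity of $\mathbb{F}_q[X]$, the finiteness of $\mathbb{F}_q$, and the explicit denominator bound furnished by the Ore equation. This module-theoretic pigeonhole step is the technical heart of Christol's original proof and is where the peculiarities of positive characteristic really enter.
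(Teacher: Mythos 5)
The paper itself does not prove this theorem: it is quoted from \cite{Christol,CKMFR}, and the only in-paper material is the period-doubling computation illustrating the easy direction in one special case. So your proposal can only be measured against the classical proof, which is indeed the one you are outlining. The direction ``automatic $\Rightarrow$ algebraic'' is essentially complete as you sketch it; to finish your ``finite-dimensional span'' claim cleanly, note that the kernel relations $F_i=\sum_r X^r F_{\pi(i,r)}^q$ say $V\subseteq V^{(q)}$, where $V$ is the ${\mathbb F}_q(X)$-span of the kernel series and $V^{(q)}$ is the span of $\{G^q: G\in V\}$; since Frobenius preserves linear independence (compare supports modulo $q$), these spaces have equal dimension, hence $V=V^{(q)}$, hence $F^{q^k}\in V$ for all $k$, which yields the Ore relation.

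The converse is where the genuine gap sits, and it is exactly where you flag it. Two points. First, the existence of an Ore relation with $A_0\neq 0$ needs an argument: if every relation had $A_0=0$, write each $A_j$ ($j\geq 1$) as $\sum_s X^s B_{j,s}^q$ and compare supports modulo $q$ to extract a shorter nontrivial relation; iterate. Second, the finiteness mechanism you propose does not work as stated: a finitely generated ${\mathbb F}_q[X]$-module $M$ is infinite, and a collision in a finite quotient $M/A_0^kM$ bounds nothing about the orbit of $F$ inside $M$ itself. What the classical proof actually does is explicit bookkeeping: every iterate $\Lambda_{r_1}\cdots\Lambda_{r_k}(F)$ is written in the fixed form $\sum_{j=0}^{d-1}B_jF^{q^j}$ with coefficients of \emph{uniformly bounded degree} (or bounded height, with denominator a fixed power of $A_0$). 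The bound is self-sustaining because $\Lambda_r$ divides degrees by roughly $q$, so the degree recursion $H\mapsto H/q+\max_j\deg A_j$ has a finite fixed point; finiteness of ${\mathbb F}_q$ then makes the set of admissible coefficient tuples finite, hence the $q$-kernel is finite, which is equivalent to $q$-automaticity. Your instinct that Noetherianity, the finiteness of ${\mathbb F}_q$, and denominator control via $A_0$ are the needed ingredients is right, but the binding step is this degree bound, not a pigeonhole in a quotient module.
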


\section{Conclusion}

The Tower of Hanoi and its variations have many mathematical properties.
They also 
are used in cognitive psychology. It is interesting to note that psychologists,
as well as mathematicians, are looking at the shortest path to reconstruct the 
tower on another peg. But, is really the shortest path the most interesting?  If 
the answer is yes in terms of strategy for a puzzle, it is not clear that the answer
is also yes for detecting all kinds of skills for a human being. One of the speakers
at the Symposium 
\selectlanguage{french}
``La \og Tour d'Hano\"{\i} \fg, un casse-t\^ete 
math\'ematique d'\'Edouard Lucas (1842-1891)'' 
\selectlanguage{english}
said that finding 
the shortest path might not be the most interesting question. It reminded the
first author (JPA) of a discussion he once 
had with the French composer M. Fr\'emiot. After JPA showed him the Tower of 
Hanoi and an algorithmic solution, Fr\'emiot composed a {\em Messe pour orgue 
\`a l'usage des paroisses} \cite{Fremiot}. Interestingly
enough, what he emphasized is the rule ``no disk on a smaller one''.
His ``Messe'' used only that rule, without any attempt to reconstruct the tower
with the smallest number of moves. In contrast to algorithms (robots?) that
(try to) optimize a quantitative criterion, or to mathematicians
who (try to) prove the optimality of a solution or study the set of all
solutions, is it not the case that composers, and more generally, artists are 
interested in qualitatively (rather than quantitatively) exceptional elements 
of a given set, in ``jewels'' rather than in ``generic'' elements, in 
non-necessarily rational choices rather than in exhaustive studies or rigorous
proofs...? Ren\'e Char wrote ``Le po\`ete doit laisser des traces de son passage
non des preuves. Seules les traces font r\^ever''.


\begin{thebibliography}{99.}%

\bibitem{All} J.-P. Allouche, Note on the cyclic towers of Hanoi, Theoret. 
Comput. Sci. {\bf 123} (1994) 3--7.

\bibitem{AARS} J.-P. Allouche, D. Astoorian, J. Randall, J. Shallit,
Morphisms, squarefree strings, and the Tower of Hanoi puzzle, 
Amer. Math. Monthly {\bf 101} (1994) 651--658. 

\bibitem{AB} J.-P. Allouche, R. Bacher, Toeplitz sequences, paperfolding, Hanoi 
towers and progression-free sequences of integers, Ens. Math. {\bf 38}
(1992) 315--327.

\bibitem{ABS} J.-P. Allouche, J. B\'etr\'ema, J. Shallit, Sur des points fixes 
de morphismes d'un mono\"{\i}de libre, RAIRO, Info. Th\'eor. Appl. {\bf 23} (1989) 
235--249.

\bibitem{AD} J.-P. Allouche, F. Dress, Tours de Hano\"{\i} et automates, RAIRO, 
Info. Th\'eor. Appl. {\bf 24} (1990) 1--15.

\bibitem{AllSap} J.-P. Allouche, A. Sapir, Restricted towers of Hanoi
and morphisms, in {\it Developments in Language Theory},
Lect. Notes Comp. Sci. {\bf 3572}, Springer, 2005, pp.\ 1--10.
 
\bibitem{ubi} J.-P. Allouche, J. Shallit, The ubiquitous Prouhet-Thue-Morse 
sequence, in \textit{Sequences and Their Applications},
Proceedings of SETA'98, C. Ding, T. Helleseth and H. Niederreiter (Eds.),
(Springer, 1999), pp. 1--16.
 
\bibitem{AS} J.-P. Allouche, J. Shallit,  \textit{Automatic Sequences: Theory, 
Applications, Generalizations}, (Cambridge University Press, 2003).

\bibitem{Berstel} J. Berstel, Sur la construction de mots sans carr\'e,
S\'em. Th\'eor. Nombres Bordeaux, 1978--1979, Expos\'e 18, 18-01--18-15.

\bibitem{Christol} G. Christol, Ensembles presque p\'eriodiques
$k$-reconnaissables, Theoret. Comput. Sci. {\bf 9} (1979) 141--145.

\bibitem{CKMFR} G. Christol, T. Kamae, M. Mend\`es France, G. Rauzy,
Suites alg\'ebriques, automates et substitutions, Bull. Soc. Math.
France {\bf 108} (1980) 401--419.

\bibitem{Fremiot} M. Fr\'emiot, \textit{Messe pour orgue \`a l'usage des 
paroisses}, (Compact Disc Coriolan COR 318 301, ARCAM Provence Alpes C\^ote d'Azur,
1993).

\bibitem{Hinz1} A. M. Hinz, The tower of Hanoi, Ens. Math. {\bf 35} (1989) 
289--321.

\bibitem{Hinz2} A. M. Hinz, Pascal's triangle and the tower of Hanoi, Amer. Math.
Monthly {\bf 99} (1992) 538--544.

\bibitem{Hinz3} A. M. Hinz, Square-free tower of Hanoi sequences, Ens. Math. 
{\bf 42} (1996) 257--264.

\bibitem{Hinz4} A. M. Hinz, S. Klav\v{z}ar, U. Milutinovi\'c, D. Parisse, C. Petr,
Metric properties of the Tower of Hanoi graphs and Stern's diatomic sequence,
Eur. J. Comb. {\bf 26} (2005) 693--708.

\bibitem{Hinz5} A. M. Hinz, A. Schief, The average distance on the Sierpi\'nski
gasket, Probab. Th. Rel. Fields {\bf 87} (1990) 129--138.

\bibitem{KMP1} S. Klav\v{z}ar, U. Milutinovi\'c, C. Petr, Combinatorics of topmost 
discs of multi-peg Tower of Hanoi problem, Ars. Combin.  {\bf 59} (2001) 55--64.

\bibitem{KMP2} S. Klav\v{z}ar, U. Milutinovi\'c, C. Petr, Hanoi graphs and some
classical numbers, Expo. Math. {\bf 23} (2005) 371--378.

\bibitem{Lucas} \'E. Lucas, \textit{Jeux scientifiques pour servir \`a l'Histoire, \`a
l'Enseignement et \`a la Prati\-que\ du calcul et du dessin}, premi\`ere s\'erie,
n$^{\rm o}$~3, \textit{La tour d'Hanoi, jeu tomb\'e de Sa\-turne et rapport\'e du Tonkin
par le Mandarin N. Claus (de Siam)}, Paris, 1889.\newline
\url{http://edouardlucas.free.fr/pdf/oeuvres/hanoi.pdf}

\bibitem{SaintCyr} J. A. Saint-Cyr, A. E. Taylor, A. E. Lang, Procedural learning and 
neostyrial dysfunction in man, Brain {\bf 111} (1988) 941--960.

\bibitem{Salomaa} A. Salomaa, On exponential growth in Lindenmayer systems,
Nederl. Akad. Wetensch. Proc. Ser. A {\bf 76} = Indag. Math. {\bf 35} (1973) 
23--30

\bibitem{Sapir} A. Sapir, The tower of Hanoi with forbidden moves,
Comput. J. {\bf 47} (2004) 20--24.

\bibitem{Shallice} T. Shallice, Specific impairments of planning, Phil. Trans. R. Soc. 
Lond. B {\bf 298} (1982) 199--209.

\bibitem{Simon} H. A. Simon, The functional equivalence of problem solving skills,
Cognitive Psychology {\bf 7} (1975) 268--288.

\bibitem{Sloane} N. J. A. Sloane, \textit{The on-line encyclopedia of integer 
sequences}, available at the url
\url{http://www.research.att.com/$\sim$njas/sequences/}

\bibitem{Stockmeyer} P. K. Stockmeyer, The tower of Hanoi: a historical survey and
bibliography, available at the url
\url{http://www.cs.wm.edu/$\sim$pkstoc/biblio.ps}

\end{thebibliography}
\end{document}